\documentclass[11pt]{amsart}
\usepackage{amssymb,pb-diagram}
\usepackage{amsmath}
\usepackage{amsthm,amssymb,amsfonts}
\usepackage{amssymb,amscd}
\usepackage{epic,eepic,epsfig}
\usepackage{latexsym}
\usepackage{xy}
\xyoption{all}

\textwidth=5.5 true in

\newtheorem{Thm}{Theorem}[section]

\newtheorem{Cor}[Thm]{Corollary}
\newtheorem{Lem}[Thm]{Lemma}
\newtheorem{Prop}[Thm]{Proposition}

\theoremstyle{definition}
\newtheorem{Def}[Thm]{Definition}

\theoremstyle{remark}

\def \cal{\mathcal}

\def\eps{\varepsilon}

\def\Ndb{\mathbb N}

\def\Rdb{\mathbb R}

\def\Sz{\text{Sz}}

\newcommand{\bib}{\bibitem}

\def\Sz{\text{Sz }}

\begin{document}

\title{Three-space property for asymptotically uniformly smooth renormings}

\author{P.A.H. Brooker$^*$}
\address{$^*$Universit\'{e} de Franche-Comt\'{e}, Laboratoire de Math\'{e}matiques UMR 6623,
16 route de Gray, 25030 Besan\c{c}on Cedex, FRANCE
\vskip 0cm
$^*$46/116 Blamey Crescent, Campbell ACT 2612, AUSTRALIA}
\email{philip.a.h.brooker@gmail.com}

\author{G. Lancien$^{**}$}

\address{$^{**}$Universit\'{e} de Franche-Comt\'{e}, Laboratoire de Math\'{e}matiques UMR 6623,
16 route de Gray, 25030 Besan\c{c}on Cedex, FRANCE.}
\email{gilles.lancien@univ-fcomte.fr}

\begin{abstract} We prove that if $Y$ is a closed subspace of a Banach space $X$ such that $Y$ and $X/Y$ admit an equivalent asymptotically uniformly smooth norm, then $X$ also admits an equivalent asymptotically uniformly smooth norm. The proof is based on the use of the Szlenk index and yields a few other applications to renorming theory.
\end{abstract}

\subjclass[2010]{46B20}

\maketitle

\markboth{P.A.H. BROOKER AND G. LANCIEN}{}

\section{Introduction, notation}

A three space property for Banach spaces is a property, let us call it $(P)$, which satisfies the following: a Banach space $X$ has property $(P)$ whenever there is a closed subspace $Y$ of $X$ such that the Banach spaces $Y$ and $X/Y$ have property $(P)$. We refer to the book by J. Castillo and M. Gonz\'{a}lez \cite{CG} for a thorough exposition of the properties of Banach spaces that are known to be, or not to be, three space properties. Let us mention the fundamental result by Enflo, Lindenstrauss and Pisier who proved in \cite{ELP} that being isomorphic to a Hilbert space is not a three space property. On the other hand, it is known that reflexivity (see \cite{KS}) and super-reflexivity are three space properties. It then follows from Enflo's famous renorming theorem (\cite{E}) that having an equivalent uniformly convex norm (or having an equivalent uniformly smooth) norm is a three space property.

The aim of this note is to prove that having an equivalent asymptotically uniformly smooth norm is also a three space property (Theorem \ref{main}). The proof is based on the use of the Szlenk index. We shall derive a few applications to other types of renorming.

We now need to to define the properties of norms that will be
considered in this paper. For a Banach space $(X,\|\ \|)$ we
denote by $B_X$ the closed unit ball of $X$ and by $S_X$ its unit
sphere. For $t>0$, $x\in S_X$ and $Y$ a closed linear subspace of
$X$, we define
$$\overline{\rho}(t,x,Y)=\sup_{y\in S_Y}\|x+t y\|-1\ \ \ \ {\rm and}\ \ \
\ \overline{\delta}(t,x,Y)=\inf_{y\in S_Y}\|x+t y\|-1.$$ Then
$$\overline{\rho}_X(t)=\sup_{x\in S_X}\ \inf_{{\rm
dim}(X/Y)<\infty}\overline{\rho}(t,x,Y)\ \ \ \ {\rm and}\ \ \ \
\overline{\delta}_X(t)=\inf_{x\in S_X}\ \sup_{{\rm
dim}(X/Y)<\infty}\overline{\delta}(t,x,Y).$$ The norm $\|\ \|$ is said to be
{\it asymptotically uniformly smooth} (in short AUS) if
$$\lim_{t \to 0}\frac{\overline{\rho}_X(t)}{t}=0.$$
It is said to be {\it asymptotically uniformly convex} (in short
AUC) if
$$\forall t>0\ \ \ \ \overline{\delta}_X(t)>0.$$

These moduli have been first introduced by V. Milman in \cite{M}.

\medskip\noindent
Similarly in $X^*$ there is a weak$^*$-modulus of
asymptotic uniform convexity defined by
$$ \overline{\delta}_X^*(t)=\inf_{x^*\in S_{X^*}}\sup_{E}\inf_{y^*\in S_E}\{\|x^*+ty^*\|-1\},$$

\noindent where $E$ runs through all weak$^*$-closed subspaces of
$X^*$ of finite codimension.

\noindent The norm of $X^*$ is said to be {\it weak$^*$
asymptotically convex} (in short w$^*$-AUC) if
$$\forall t>0\ \ \ \ \overline{\delta}_X^*(t)>0.$$

The following proposition is elementary. Its proof can for
instance be found in \cite{D}.

\begin{Prop}\label{duality} Let $X$ be a Banach space. Then $\|\ \|_X$ is
asymtotically uniformly smooth if and only if $\|\ \|_{X^*}$ is
weak$^*$ asymptotically uniformly convex.
\end{Prop}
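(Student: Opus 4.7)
The plan is to prove both directions of the equivalence by expressing each modulus in a dual form via the annihilator pairing, and then matching the two expressions through an $\varepsilon/\delta$ argument.

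The starting identity is obtained by interchanging two suprema in $\|x+ty\|=\sup_{x^*\in B_{X^*}}\langle x^*,x+ty\rangle$, giving
\[
\sup_{y\in S_Y}\|x+ty\|=\sup_{x^*\in B_{X^*}}\bigl[\langle x^*,x\rangle+t\,d(x^*,Y^\perp)\bigr],
\]
where I have used $\sup_{y\in S_Y}\langle x^*,y\rangle=\|x^*|_Y\|=d(x^*,Y^\perp)$ via the isometric identification $Y^*\cong X^*/Y^\perp$. Since $Y\mapsto Y^\perp$ is a bijection between closed finite-codimensional subspaces of $X$ and finite-dimensional (hence automatically weak$^*$-closed) subspaces of $X^*$, this rewrites $\overline\rho_X(t)+1$ as a three-level expression of the form $\sup_{x\in S_X}\inf_{F}\sup_{x^*\in B_{X^*}}[\langle x^*,x\rangle+t\,d(x^*,F)]$, with $F$ ranging over finite-dimensional subspaces of $X^*$. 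A parallel Hahn--Banach/Goldstine computation (using $\|x^*+ty^*\|=\sup_{z\in B_X}\langle x^*+ty^*,z\rangle$) expresses $\overline\delta_X^*(t)+1$ in the dual shape $\inf_{x^*\in S_{X^*}}\sup_{F}\inf_{y^*\in S_{F^\perp}}\|x^*+ty^*\|$, with $F$ now ranging over finite-dimensional subspaces of $X$.

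With both moduli in this symmetric form, the equivalence between $\overline\rho_X(t)/t\to 0$ as $t\to 0^+$ and strict positivity of $\overline\delta^*_X(t)$ for every $t>0$ reduces to an $\varepsilon/\delta$ manipulation. For the forward direction, I would fix $x^*\in S_{X^*}$ and $t>0$, take $x\in S_X$ almost norming $x^*$, apply the AUS estimate to $x$ at an appropriate small scale $s$ to obtain a closed finite-codimensional $Y\subseteq X$ on which $\sup_{y\in S_Y}\|x+sy\|$ is close to $1$, and then choose a finite-dimensional $F\subseteq X$ containing $x$ and adapted to $Y$ so that any $y^*\in S_{F^\perp}$ admits a near-norming vector in $Y\cap B_X$; combining the two estimates bounds $\|x^*+ty^*\|$ from below by $1+\delta(t)$ for some $\delta(t)>0$. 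The converse is the symmetric argument: a near-extremal weakly null witness of failure of AUS in $X$ is transferred via Goldstine to a weak$^*$-null perturbation in $X^*$, contradicting w$^*$-AUC.

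The main technical obstacle is the asymmetric parameterisation of the two moduli: $\overline\rho_X$ is indexed by closed finite-codimensional subspaces of $X$ (whose annihilators are finite-dimensional in $X^*$), whereas $\overline\delta^*_X$ is indexed by weak$^*$-closed finite-codimensional subspaces of $X^*$ (whose pre-annihilators are finite-dimensional in $X$). Reconciling this requires the automatic weak$^*$-closedness of finite-dimensional subspaces of $X^*$, together with a careful choice of $F\subseteq X$ in the calculation above so that the interaction between $y^*\in S_{F^\perp}$ and the AUS subspace $Y$ yields a nontrivial lower bound across the full range $t>0$ rather than only for $t$ beyond a naive threshold.
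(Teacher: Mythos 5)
First, a point of comparison: the paper does not actually prove Proposition \ref{duality}; it records it as elementary and refers to Dutrieux's thesis \cite{D} (a quantitative version --- that $\overline{\rho}_X$ and $\overline{\delta}_X^*$ are equivalent to mutually dual Young functions --- is in \cite{GKL}). Your outline is the standard argument behind those references, and the overall route (reparametrise both moduli through the annihilator correspondence, then pair norming vectors) is the right one. However, two steps of your forward direction would fail as written. (i) Expanding $\langle x^*+ty^*,x+sy\rangle$ produces a cross term $s\langle x^*,y\rangle$, which can be as negative as $-s$ and swamps the gain $st\langle y^*,y\rangle$ for $t<1$; you never address it. The standard repair is to replace $Y$ by $Y\cap\ker x^*$ (still finite-codimensional, and this only decreases $\overline{\rho}(s,x,\cdot)$), which kills that term. (ii) The claim that $F$ can be chosen so that every $y^*\in S_{F^\perp}$ admits a \emph{near-norming} vector in $Y\cap B_X$ is false. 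Since $\sup_{y\in B_Y}\langle y^*,y\rangle=d(y^*,Y^\perp)$, the conditions $y^*|_F=0$, $\|y^*\|=1$ and ``$F$ almost norms $Y^\perp$'' only yield $\|y^*-g\|\ge\max\bigl(1-\|g\|,(1-\eta)\|g\|\bigr)$ for $g\in Y^\perp$, hence a lower bound of about $1/2$, and this is attained: in $X=\ell_1$ with $Y=\ker(1,1,1,\dots)$ and $F=\mathrm{span}(e_1,\dots,e_n)$, the functional $y^*=(0,\dots,0,1,1,\dots)$ lies in $S_{F^\perp}$ but has $d(y^*,Y^\perp)=1/2$. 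A uniform positive constant is all the qualitative statement needs, so the proof survives with worse constants, but the claim must be weakened and the final estimate redone accordingly.

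Second, the converse direction is only gestured at, and ``via Goldstine'' is not the relevant tool. What is actually needed: from a failure of AUS at some $x\in S_X$ and scale $s$, extract a weakly null net $(y_\alpha)$ in $S_X$ with $\|x+sy_\alpha\|>1+\sigma s$; take Hahn--Banach functionals $z_\alpha^*$ norming $x+sy_\alpha$; pass to a weak$^*$-cluster point $z^*$ by Alaoglu; observe $\|z_\alpha^*-z^*\|\ge\sigma/2$ eventually because $\langle z_\alpha^*,y_\alpha\rangle\ge\sigma$ while $\langle z^*,y_\alpha\rangle\to0$; and then --- the step your sketch omits entirely --- perturb the normalised weak$^*$-null functionals $(z_\alpha^*-z^*)/\|z_\alpha^*-z^*\|$ so that they land \emph{inside} an arbitrary prescribed weak$^*$-closed finite-codimensional $E=F^\perp$, using that $X^*/F^\perp\cong F^*$ is finite dimensional, so weak$^*$-null implies norm-small modulo $E$. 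Without that perturbation you only produce witnesses asymptotically close to $E$, whereas the definition of $\overline{\delta}_X^*$ requires elements of $S_E$. None of this is deep, but it is where the actual content of the converse lies, and it should be written out.
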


Note that more precisely, it is proved in \cite{GKL} that for a separable Banach space  $\overline{\delta}_X^*(\ )$ and $\overline{\rho}_X(\ )$ are equivalent to the dual Young's function of each other (see \cite{M} for the case when $X$ has a shrinking basis).

\medskip\noindent {\bf Examples.}

\smallskip (1) If $X=(\sum_{n=1}^\infty F_n)_{\ell_p}$, $1\le p<\infty$ and the $F_n$'s are finite
dimensional, then
$\overline{\rho}_X(t)=\overline{\delta}_X(t)=(1+t^p)^{1/p}-1$.

\smallskip (2) If $X=(\sum_{n=1}^\infty F_n)_{c_0}$, $1\le p<\infty$ and the $F_n$'s are finite
dimensional, then for all $t\in (0,1]$,
$\overline{\rho}_{X}(t)=0$.

\medskip We shall also refer to a notion defined by R. Huff in
\cite{H}.

\begin{Def} The norm $\|\ \|$ of a Banach space $X$ is {\it nearly
uniformly convex} (in short NUC) if for any $\eps>0$ there exists
$\delta>0$ such that conv$\{x_n,\ n\ge 1\}\cap (1-\delta)B_X\neq
\emptyset$, whenever $(x_n)_{n=1}^\infty$ is an $\eps$-separated
sequence in $B_X$.
\end{Def}

Although it is not exactly stated in these terms, the following is
essentially proved in \cite{H}

\begin{Thm}\label{huff} {\bf (Huff 1980)} Let $X$ be a Banach space. Then $\|\ \|_X$ is NUC if
and only if $\|\ \|_X$ is AUC and $X$ is reflexive.
\end{Thm}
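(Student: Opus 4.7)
The plan is to prove both directions of the equivalence by contradiction.

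For the forward direction, suppose $X$ is NUC. I first establish reflexivity via James' theorem: if $X$ were not reflexive, there would exist $x^\ast \in S_{X^\ast}$ failing to attain its norm on $B_X$. Choose $c_n \in B_X$ with $x^\ast(c_n) \to 1$; no subsequence of $(c_n)$ can be norm-Cauchy, for its limit would satisfy $x^\ast(c) = 1$, contradicting non-attainment. Hence the range of $(c_n)$ is not totally bounded and contains an $\eps$-separated subsequence. Applying NUC to a tail so far out that $x^\ast(c_n) > 1 - \tfrac12 \delta(\eps)$ on every remaining $c_n$ yields a convex combination $w$ with $\|w\| \leq 1 - \delta(\eps)$; but $\|w\| \geq x^\ast(w) > 1 - \tfrac12 \delta(\eps)$, a contradiction.

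To deduce AUC (now using reflexivity), suppose $\bar\delta_X(t) = 0$ for some $t > 0$. Fix $x \in S_X$ with $\sup_Y \inf_{y \in S_Y} \|x + ty\| - 1 < \sigma$ for a preassigned small $\sigma > 0$, and construct a decreasing chain of finite-codimensional subspaces $Y_n$ with trivial intersection (via a separable reduction: $X^\ast$ restricted to any separable subspace admits a countable separating family since $X$ is reflexive). Pick $y_n \in S_{Y_n}$ with $\|x + t y_n\| < 1 + \sigma$; then $y_n \rightharpoonup 0$. Passing to a basic subsequence by Bessaga-Pelczynski gives $\|y_m - y_n\| \geq 1/K$ for the basis constant $K$. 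A norming functional $f \in S_{X^\ast}$ for $x$ shows $\|x + ty_n\| \geq f(x + ty_n) \to 1$, so the normalizations $z_n := (x + ty_n)/\|x + ty_n\|$ form an $\eps$-separated sequence for $\eps \asymp t/K$ with $z_n \rightharpoonup x$. NUC applied to a tail yields a convex combination $w$ with $\|w\| \leq 1 - \delta$, but $f(z_n) \to 1$ on the tail gives $\|w\| \geq f(w) > 1 - \delta$, a contradiction.

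For the converse, assume $X$ is reflexive and AUC, and let $(x_n) \subset B_X$ be $\eps$-separated. By reflexivity pass to a subsequence with $x_n \rightharpoonup x \in B_X$; the triangle inequality forces $L := \lim \|x_n - x\| \geq \eps/2$ after a further subsequence. If $\|x\| \leq 1 - \delta_0$, Mazur's lemma supplies a convex combination of the $x_n$'s within $\delta_0/2$ of $x$, hence in $(1 - \delta_0/2) B_X$. Otherwise $\|x\| > 1 - \delta_0$, and applying AUC at $\bar x := x/\|x\|$ with parameter $s := L/\|x\|$ yields a finite-codimensional $Y$ with $\|\bar x + sy\| \geq 1 + \tfrac12 \bar\delta_X(s)$ for every $y \in S_Y$. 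A convexity computation (writing $x + sy = (s/t)(x + ty) + (1-s/t)x$) gives $\|x + ty\| - 1 \geq (t/s)(\|x + sy\| - 1)$, from which $\bar\delta_X$ is non-decreasing in the AUC regime, so $\bar\delta_X(s) \geq \bar\delta_X(\eps/2) > 0$. Since $x_n - x \rightharpoonup 0$ and $Y$ has finite codimension, a continuous linear projection onto a complement of $Y$ produces $y_n \in Y$ with $\|y_n - (x_n - x)\| \to 0$; rescaling $y_n$ to $S_Y$ and invoking the AUC inequality gives $\|x_n\| \geq \|x\|(1 + \tfrac12 \bar\delta_X(\eps/2)) - o(1)$, which exceeds $1$ once $\delta_0 < \tfrac14 \bar\delta_X(\eps/2)$, contradicting $x_n \in B_X$. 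Hence $\delta = \delta_0/2$ verifies NUC.

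The main technical obstacle sits in the NUC $\Rightarrow$ AUC step, where one must coordinate three constructions: shrinking the $Y_n$ to secure weak nullity of $(y_n)$, extracting a basic subsequence to secure uniform separation, and obtaining the lower bound $\|x + ty_n\| \to 1$ via a norming functional of $x$. Reflexivity, established first, underpins both the weak-limit arguments throughout and the finite-rank projection perturbing $x_n - x$ into $Y$ in the converse direction.
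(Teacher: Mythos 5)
The paper does not actually prove this statement: Theorem \ref{huff} is quoted from Huff's article \cite{H} with the remark that it is ``essentially proved'' there, so there is no internal argument to compare against. Your reconstruction is sound in its overall architecture: James' theorem combined with the NUC convex-combination estimate for reflexivity; perturbation of an asymptotically flat direction at $x$ into a separated sequence of unit vectors for NUC $\Rightarrow$ AUC; and, for the converse, the dichotomy on $\|x\|$ for the weak limit $x$, with Mazur's lemma in one case and, in the other, the finite-rank projection pushing $x_n-x$ into $Y$ together with the monotonicity of $\overline{\delta}_X$ (your convexity computation is correct and is exactly what gives a $\delta$ depending only on $\eps$). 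The reflexivity step and the converse direction are complete as written.

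The one genuine soft spot is in NUC $\Rightarrow$ AUC. You ask for a decreasing sequence $(Y_n)$ of finite-codimensional subspaces with $\bigcap_n Y_n=\{0\}$ in order to force $y_n\rightharpoonup 0$. Such a sequence yields a countable separating family in $X^*$, and for reflexive $X$ its closed span is weak$^*$-dense, hence norm dense, so $X^*$ and therefore $X$ must be separable; the parenthetical ``separable reduction'' does not repair this for general $X$, because the subspaces $Y$ in the definition of $\overline{\delta}_X$ are finite-codimensional in $X$ and the witnesses $y$ with $\|x+ty\|<1+\sigma$ need not lie in any prescribed separable subspace. Since the theorem is stated for arbitrary Banach spaces, this is a gap as written --- but an easily repairable one, because weak nullity of $(y_n)$ is not actually needed: (i) including $\ker f$, for $f$ a norming functional of $x$, in every $Y_n$ already gives $\|x+ty_n\|\ge f(x+ty_n)=1$ and $f(z_n)\ge(1+\sigma)^{-1}$; (ii) choosing the $Y_n$ by Mazur's basic-sequence construction makes $(y_n)$ basic with constant at most $2$, hence $\|y_m-y_n\|\ge 1/2$, which is all the separation your $z_n$'s require. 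Two smaller remarks: the step ``then $y_n\rightharpoonup 0$'' needs Eberlein--\v{S}mulian (every weak cluster point lies in $\bigcap_n Y_n$), and the appeal to the hard direction of James' theorem, while legitimate, is heavier machinery than Huff's original route through James' sequential characterization of non-reflexivity.
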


Note that $\ell_1$ is a non reflexive AUC Banach space.

\medskip Then, in \cite{P} S. Prus characterized the notion dual to
NUC in the following way.

\begin{Thm}\label{prus} {\bf (Prus 1989)} Let $X$ be a Banach space. Then $\|\
\|_{X^*}$ is NUC if and only if  for every $\eps> 0$ there is an
$\eta > 0$ such that, for all $t\in [0, \eta]$ and each basic
sequence $(x_n)_n$ in $B_X$, there is $k > 1$ such that $\|x_1
+tx_k\|\le 1+\eps t$.

\noindent Such a norm is naturally called {\it nearly uniformly
smooth} (in short NUS).

\noindent Finally $\|\ \|_X$ is NUS if and only if $X$ is
reflexive and $\|\ \|_X$ is AUS.

\end{Thm}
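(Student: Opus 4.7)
The statement splits into two equivalences: (A) the equivalence of $\|\ \|_{X^*}$ being NUC with the basic-sequence inequality (which defines NUS), and (B) the equivalence of NUS with $X$ being reflexive and AUS. The plan is to dispatch (B) first by a short duality chain, and then deduce (A) through the resulting three-way equivalence.

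For (B), apply Huff's Theorem (Theorem \ref{huff}) to $X^*$: $\|\ \|_{X^*}$ NUC is equivalent to $X^*$ being reflexive and $\|\ \|_{X^*}$ being AUC. Reflexivity transfers between $X$ and $X^*$, and in a reflexive dual every norm-closed finite-codimensional subspace is automatically weak$^*$-closed, so AUC and w$^*$-AUC of $X^*$ coincide. Proposition \ref{duality} then identifies w$^*$-AUC of $X^*$ with AUS of $X$, yielding (B).

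For (A), via (B) it suffices to show that the basic-sequence inequality is equivalent to reflexivity together with AUS. For reflexivity plus AUS implying the inequality, fix a basic sequence $(x_n) \subset B_X$. Reflexivity gives a weakly convergent subsequence, and the weak limit lies in the closed linear span of $(x_n)$, where the biorthogonal functionals (extended to $X$ by Hahn--Banach) force it to vanish; so after extraction, the sequence is weakly null. Set $\tilde{x}_1 = x_1/\|x_1\|$ and apply AUS to $\tilde{x}_1$ (the case $\|x_1\|$ small being trivial from $\|x_1 + tx_k\| \le \|x_1\| + t$) to obtain a finite-codimensional $Y$ with $\sup_{y\in S_Y}\|\tilde{x}_1 + ty\| \le 1 + (\eps/2)t$. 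Splitting $X = Y \oplus F$ with $F$ finite-dimensional, the projection onto $F$ norm-annihilates the weakly null sequence, so for large $k$ the vector $x_k$ lies close in norm to a scalar multiple of an element of $S_Y$, and after rescaling and absorbing the error terms into the $(\eps/2)t$ slack, the conclusion $\|x_1 + tx_k\| \le 1 + \eps t$ follows.

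For the converse direction, the inequality yields reflexivity via James's characterization: a non-reflexive $X$ admits $\theta \in (0,1)$ and a basic sequence $(x_n) \subset B_X$ with uniformly separated convex hulls, forcing $\|x_1 + tx_k\|$ to be close to $1 + t$ for every $k$, which contradicts the inequality once $\eps < 1$. Granted reflexivity, to verify AUS at $x \in S_X$, suppose no finite-codimensional subspace $Y$ satisfies $\sup_{y\in S_Y}\|x + ty\| \le 1 + \eps t$; then one inductively constructs unit vectors $y_n$ with $\|x + ty_n\| > 1 + \eps t$ lying in kernels of finitely many functionals selected via the Bessaga--Pelczynski selection principle, so that after passing to a subsequence $(x, y_{n_1}, y_{n_2}, \ldots)$ is basic in $B_X$, contradicting the inequality. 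The main obstacle will be this Bessaga--Pelczynski construction, which must simultaneously preserve basicness of the augmented sequence, near-weak-nullity of the tail, and the violation $\|x + ty_n\| > 1 + \eps t$; the separation and renormalization step in the forward direction is also delicate but routine.
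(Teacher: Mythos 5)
The paper does not prove this statement at all: it is quoted verbatim as a result of Prus \cite{P} (just as Theorem \ref{huff} is quoted from Huff), and the text around it supplies no argument. So there is no proof of the paper's to compare against; what can be assessed is whether your reconstruction is sound, and in outline it is. Your decomposition is legitimate: Huff's theorem applied to $X^*$ plus Proposition \ref{duality} (using that in a reflexive dual the norm-closed and weak$^*$-closed finite-codimensional subspaces coincide) gives ``$\|\cdot\|_{X^*}$ NUC $\Leftrightarrow$ $X$ reflexive and AUS,'' and then the whole theorem reduces to showing the basic-sequence inequality is equivalent to ``reflexive and AUS.'' Your forward direction (reflexive $\Rightarrow$ every basic sequence is weakly null via the coordinate functionals, then the AUS modulus at $x_1/\|x_1\|$ plus finite-dimensionality of the complement of $Y$) is correct, and the convexity of $t\mapsto\sup_{y\in S_Y}\|x+ty\|-1$ takes care of the rescaling of $t$ by $\|y_k\|$. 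Your converse is also workable, with two caveats you should make explicit. First, James's characterization produces, for \emph{every} $\theta<1$, sequences $(x_n)\subset B_X$ and $(f_n)\subset B_{X^*}$ with $f_n(x_m)=\theta$ for $n\le m$; these are not automatically basic, so you must invoke the fact that a bounded sequence with no weakly convergent subsequence admits a basic subsequence (and check, via a weak$^*$ cluster point of $(f_n)$, that the James sequence has none), and you must choose $\theta>(1+\eps\eta)/(1+\eta)$ rather than ``some $\theta\in(0,1)$.'' Second, the construction of the sequence $(x,y_1,y_2,\dots)$ witnessing failure of AUS is Mazur's technique for producing basic sequences (kernels of finitely many norming functionals at each stage), not the Bessaga--Pe{\l}czy\'nski selection principle, and no passage to a subsequence or ``near-weak-nullity of the tail'' is needed there: the inductive choice already yields a basic sequence whose first term is $x$ and all of whose later terms violate $\|x+ty\|\le 1+\eps t$. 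With these points tightened, your argument is a complete and essentially standard proof of the quoted theorem.
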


\medskip The Szlenk index is the fundamental object related to the existence of
equivalent asymptotically uniformly smooth norms. Let us first
define it.

\noindent Consider a real Banach space $X$ and $K$ a
weak$^*$-compact subset of $X^*$. For $\eps>0$ we let $\cal V$ be
the set of all relatively weak$^*$-open subsets $V$ of $K$ such
that the norm diameter of $V$ is less than $\eps$ and
$s_{\eps}K=K\setminus \cup\{V:V\in\cal V\}.$ Then we define
inductively $s_{\eps}^{\alpha}K$ for any ordinal $\alpha$ by
$s^{\alpha+1}_{\eps}K=s_{\eps}(s_{\eps}^{\alpha}K)$ and
$s^{\alpha}_{\eps}K={\displaystyle
\cap_{\beta<\alpha}}s_{\eps}^{\beta}K$ if $\alpha$ is a limit
ordinal. We denote by $B_{X^*}$ the closed unit ball of $X^*$. We
then define $\text{Sz}(X,\eps)$ to be the least ordinal $\alpha$
so that $s_{\eps}^{\alpha}B_{X^*}=\emptyset,$ if such an ordinal
exists. Otherwise we write $\text{Sz}(X,\eps)=\infty.$ The {\it
Szlenk index} of $X$ is finally defined by
$\text{Sz}(X)=\sup_{\eps>0}\text{Sz}(X,\eps)$. In the sequel
$\omega$ will denote the first infinite ordinal and $\omega_1$ the
first uncountable ordinal.

The seminal result on AUS renormings is due to H. Knaust, E. Odell
and T. Schlumprecht (\cite{KOS}). In fact they obtained the
following theorem, which contains much more information on the
structure of these spaces.

\begin{Thm} {\bf (Knaust-Odell-Schlumprecht 1999)} Let $X$ be a separable Banach
space such that Sz$(X)\leq \omega$ (or equivalently
such that Sz$(X,\eps)$ is finite for all $\eps>0$). Then there
exist a Banach space $Z=Y^*$ with a boundedly complete finite
dimensional decomposition $(H_j)$ and $p\in [1,+\infty)$ so that
$X^*$ embeds isomorphically (norm and weak$^*$) into $Z$ and
\begin{equation}\label{upper}
\|\sum z_j\|^p \geq \sum \|z_j\|^p\ \ {\rm for\ all\ block\ bases}\ (z_j)\ {\rm
of}\ (H_j).
\end{equation}
\end{Thm}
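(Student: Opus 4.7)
My strategy is to convert the ordinal statement $\Sz(X)\le\omega$ into a quantitative estimate, then use this to build a tree structure in $X^*$ with an upper-$\ell_p$ property on branches, and finally realize that tree inside a universal dual Banach space carrying a boundedly complete FDD. Setting $\phi(\eps):=\Sz(X,\eps)$, which is finite for every $\eps>0$ by hypothesis, the first task is to prove the submultiplicativity $\phi(\eps\delta)\le\phi(\eps)\phi(\delta)$ (a standard consequence of how the Szlenk derivation composes with rescaling of slices). Combined with the finiteness of $\phi$ on $(0,1]$, this yields a power-type bound $\phi(\eps)\le C\eps^{-p}$ for some $p\in[1,\infty)$ and some $C>0$; this $p$ is the exponent that will appear in \eqref{upper}.

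Next I would construct a weak$^*$-null tree $\mathcal T\subset(B_{X^*})^{<\omega}$ inductively by following the Szlenk derivation. At level $n$, the children of a fixed node are chosen to be normalized functionals that are nearly weak$^*$-null and that lie inside weak$^*$-open slices of $B_{X^*}$ of norm-diameter at most $2^{-n}$; the number of such slices per level is controlled by $\phi(2^{-n})\le C\,2^{np}$. A direct computation, pairing $\sum a_n x_n^*$ with a norming vector $x\in B_X$ and estimating via the small diameters together with the polynomial count of slices, then yields the branch estimate
\[
\Bigl\|\sum_{n=1}^N a_n x_n^*\Bigr\|\le K\Bigl(\sum_{n=1}^N |a_n|^p\Bigr)^{1/p}
\]
for a constant $K=K(C,p)$ along every branch of $\mathcal T$. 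A Ramsey/stabilization step may be needed to arrange uniform constants over all branches.

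The final step is to package the tree into a Banach space. Let $Y$ denote the completion of the space of finitely supported functions on $\mathcal T$ in a norm predual to the branch norm above (a James-tree--type construction), and set $Z:=Y^*$. Grouping the tree nodes by level provides finite-dimensional subspaces $H_j\subset Z$ forming a boundedly complete FDD of $Z$ that satisfies \eqref{upper}. The embedding of $X^*$ into $Z$ is defined by sending $x^*\in X^*$ to the element of $Y^*$ whose coordinates along any branch basis match the coordinates of $x^*$ computed via the tree construction, and one must verify both the norm-isomorphism onto the image and the weak$^*$-to-weak$^*$ continuity. This packaging is the main obstacle: simultaneously ensuring the bounded completeness of $(H_j)$, the upper $\ell_p$ estimate on $Z$, and the weak$^*$-continuity of the embedding $X^*\hookrightarrow Z$ requires delicate bookkeeping. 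A careful choice of predual norm, combined with the Ramsey stabilization of the branches of $\mathcal T$, appears to be the natural and essentially unavoidable route.
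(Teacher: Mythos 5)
First, a point of order: the paper does not prove this statement at all --- it is quoted verbatim from Knaust--Odell--Schlumprecht \cite{KOS} as background, so there is no proof of the authors' to compare yours against. Judged on its own terms, your sketch has the right opening move (submultiplicativity of $\eps\mapsto \Sz(X,\eps)$ plus finiteness gives the power-type bound $\Sz(X,\eps)\le C\eps^{-p}$; this is standard and correct) but then goes wrong in two places. The more serious one is that your branch estimate points in the wrong direction. The conclusion \eqref{upper} is a \emph{lower} $\ell_p$ estimate on block bases of $(H_j)$ --- this is exactly what makes the FDD boundedly complete --- and the Szlenk hypothesis, via the equivalent w$^*$-AUC formulation, yields \emph{lower} $\ell_p$ estimates along weak$^*$-null trees in $X^*$: if $(y_n^*)$ is weak$^*$-null and normalized then $\liminf_n\|x^*+ty_n^*\|\ge 1+\overline{\delta}^*_X(t)$, which iterates to $\|\sum x_i^*\|\gtrsim(\sum\|x_i^*\|^p)^{1/p}$. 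Your claimed inequality $\|\sum a_nx_n^*\|\le K(\sum|a_n|^p)^{1/p}$ is an upper estimate; it is neither implied by the hypothesis nor sufficient for \eqref{upper}. The example $X=c_0$, $X^*=\ell_1$ makes this concrete: $\Sz(c_0)=\omega$, the coordinate functionals form a weak$^*$-null tree, and $\|\sum_{i\le N}e_i^*\|_1=N$ violates any nontrivial upper estimate, while the lower $\ell_1$ estimate holds with equality. The mechanism you describe (pairing with a single norming vector, small slice diameters, counting slices) is in any case how one proves lower bounds on norms, not upper bounds.

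The second problem is that the final ``packaging'' step, which you correctly identify as the main obstacle, is precisely the content of the KOS theorem and is not resolved by your sketch. A James-tree-type predual built over the tree $\mathcal T\subset X^*$ gives no reason why \emph{all} of $X^*$ (not merely the tree nodes) should embed into the resulting dual, let alone weak$^*$-to-weak$^*$ continuously and with image compatible with the FDD. The actual route in \cite{KOS} is different: one first observes that $\Sz(X)<\infty$ forces $X^*$ to be separable, invokes Zippin's theorem to embed $X$ into a space with a shrinking FDD, and then uses a Johnson--Zippin-type blocking argument to convert the weak$^*$-null tree estimates into skipped-block estimates for a suitable blocking $(H_j)$, from which $Z$ and the embedding are built. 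Without the correct direction of the estimate and without this blocking step, the proposal does not constitute a proof.
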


Then they deduced:

\begin{Cor}\label{ukktfae} Let $X$ be a separable Banach space. The following assertions are
equivalent.

(i) $X$ admits an equivalent AUS norm.

(ii) Sz$(X)\leq \omega$.

(iii) $X$ admits an equivalent norm, whose dual norm is w$^*$-AUC
with a power type modulus.

(iv) There exist $C>0$ and $p\in [1,+\infty)$ such that: $\forall
\eps>0$ Sz$(X,\eps)\leq C\eps^{-p}$.
\end{Cor}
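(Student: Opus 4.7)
The plan is to establish the cycle (i)$\Rightarrow$(ii)$\Rightarrow$(iii)$\Rightarrow$(iv)$\Rightarrow$(ii), supplemented by the direct implication (iii)$\Rightarrow$(i). The easy implications come first. Implication (iv)$\Rightarrow$(ii) is automatic since a polynomial bound $\text{Sz}(X,\eps)\leq C\eps^{-p}$ is finite for each $\eps>0$, forcing $\text{Sz}(X)\leq\omega$. Implication (iii)$\Rightarrow$(i) is immediate from Proposition~\ref{duality}: a power-type $w^*$-AUC dual norm is in particular $w^*$-AUC, so its predual is AUS. For (i)$\Rightarrow$(ii), starting from an equivalent AUS norm on $X$, Proposition~\ref{duality} yields $\overline{\delta}_X^*(t)>0$ for every $t>0$; a direct modulus computation then shows that each Szlenk derivation $s_\eps$ applied to $rB_{X^*}$ shrinks the ambient radius by a positive amount controlled by $\overline{\delta}_X^*(\eps/(2r))$, so finitely many iterations empty $B_{X^*}$ and $\text{Sz}(X,\eps)<\omega$.

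The heart of the proof is (ii)$\Rightarrow$(iii). I would invoke the theorem of Knaust--Odell--Schlumprecht stated just above to produce a space $Z=Y^*$ with a boundedly complete FDD $(H_j)$ and exponent $p\in[1,\infty)$ such that $X^*$ embeds isomorphically and $w^*$-to-$w^*$ homeomorphically into $Z$, satisfying the lower-$\ell_p$ block estimate \eqref{upper}. The first step is to deduce from \eqref{upper} that $Z$ itself carries a $w^*$-AUC modulus of power type $p$. Given $z^*\in S_Z$ with FDD expansion $z^*=\sum_j z^*_j$ and $\alpha>0$, choose $n$ so that $\bigl\|\sum_{j>n}z^*_j\bigr\|<\alpha$; the tail $E_n=\overline{\operatorname{span}}\{H_j:j>n\}$ is $w^*$-closed of finite codimension in $Z$ (as the annihilator of an initial block of the shrinking predual FDD of $Y$), and for $y^*\in E_n$ of norm $t$ the summands $\sum_{j\leq n}z^*_j$ and $\sum_{j>n}z^*_j+y^*$ are disjointly block-supported, so \eqref{upper} yields
\[
\|z^*+y^*\|^p \;\geq\; \Big\|\sum_{j\leq n}z^*_j\Big\|^p+\Big\|\sum_{j>n}z^*_j+y^*\Big\|^p \;\geq\; (1-\alpha)^p+(t-\alpha)^p.
\]
Letting $\alpha\to 0$ gives $\overline{\delta}_Z^*(t)\geq(1+t^p)^{1/p}-1$. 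The second step transports this to $X^*$: the restriction of the $Z$-norm to $X^*$ is equivalent to the original, its unit ball $B_Z\cap X^*$ is $w^*$-closed, so the restricted norm is a dual norm coming from some equivalent norm on $X$; intersecting the $E_n$ with $X^*$ produces $w^*$-closed subspaces of finite codimension \emph{in $X^*$}, so the modulus estimate restricts to give the $w^*$-AUC-power-type property required by (iii).

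Finally, (iii)$\Rightarrow$(iv) follows from a standard shrinking-radius argument. Assuming $\overline{\delta}_X^*(t)\geq ct^p$ in some equivalent dual norm, one shows that $z^*\in s_\eps(rB_{X^*})$ implies $\|z^*\|\leq r\bigl(1-c'(\eps/r)^p\bigr)$ for some $c'=c'(c,p)>0$, by applying the $w^*$-AUC modulus at $z^*/\|z^*\|$ using the annihilator of the functionals defining a small $w^*$-neighborhood of $z^*$ as the finite-codimensional subspace; iterating yields $s_\eps^N B_{X^*}=\emptyset$ after $N=O(\eps^{-p})$ steps. The main obstacle of the whole argument is the power-type modulus computation in step one of (ii)$\Rightarrow$(iii) and its transfer to $X^*$: one must verify carefully that the finite-codimensional $w^*$-closed tail subspaces of $Z$ remain finite-codimensional $w^*$-closed upon intersection with $X^*$, and that the restricted $Z$-norm on $X^*$ genuinely is a dual norm, both of which rely crucially on the $w^*$-to-$w^*$ homeomorphism clause in the KOS embedding.
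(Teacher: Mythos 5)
Your proposal is correct and follows essentially the route the paper intends: the paper states this corollary without proof as a deduction from the Knaust--Odell--Schlumprecht embedding theorem, and your argument carries out exactly that deduction, using the lower $\ell_p$-block estimate \eqref{upper} on the FDD tails to get the power-type $w^*$-AUC modulus on $Z$ and transferring it to $X^*$ via the $w^*$-to-$w^*$ embedding, with the remaining implications handled by the standard modulus/derivation computations. The only points glossed over (extending \eqref{upper} by density to infinite tail vectors, and the monotonicity of $\overline{\delta}^*$ in the radius-shrinking iterations) are routine.
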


This was extended by M. Raja in \cite{R} to the non separable
case.

\medskip In \cite{GKL}, the relationship between the different
exponents involved in the above statement are precisely described.

\begin{Thm} Let $X$ be a separable Banach space with
$\text{Sz}(X)\leq\omega$ and define the Szlenk power type of $X$
to be
$$p_X:=\inf\{q\geq 1,\ \sup_{\eps>0} \eps^q\text{Sz}(X,\eps)<\infty\}.$$
Then
$$p_X=\inf\{q\geq 1,\ \text{there\ is\ an\ equiv.\ norm}\ |\ |\ \text{on}\ X,
\ \ \exists c>0\ \forall \eps>0,\ \overline{\delta}_{|\ |}^*(t)\geq ct^q\}.$$
\end{Thm}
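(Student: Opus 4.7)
Denote by $q_X$ the right-hand infimum; I prove the two inequalities $p_X \leq q_X$ and $q_X \leq p_X$.

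\emph{Easy direction $p_X \leq q_X$.} Given $q > q_X$ and an equivalent norm $|\cdot|$ on $X$ with $\overline{\delta}^*_{|\cdot|}(t) \geq c t^q$, I show $\text{Sz}(X,\eps) \leq C\eps^{-q}$. The engine is the one-step shrinkage
$$s_\eps\bigl(r B_{X^*,|\cdot|}\bigr) \subseteq \bigl(r - c'\eps^q/r^{q-1}\bigr) B_{X^*,|\cdot|}, \qquad r \geq \eps > 0,$$
proved by extracting from any $x^*$ in the left-hand set a weak$^*$-null sequence $(z_n^*)$ with $|z_n^*| \geq \eps/2$ and $x^* + z_n^* \in r B_{X^*,|\cdot|}$, and applying the w$^*$-AUC inequality at $x^*/|x^*|$ against a weak$^*$-closed finite-codimensional $E$ nearly realising the supremum in $\overline{\delta}^*_{|\cdot|}(\eps/(2|x^*|))$; since $z_n^*\to 0$ weakly$^*$, a small perturbation puts a tail essentially inside $E$, yielding $r \geq |x^*|\bigl(1+\overline{\delta}^*_{|\cdot|}(|z_n^*|/|x^*|)\bigr)$. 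Iterating along the discrete ODE $\Delta r \sim -\eps^q/r^{q-1}$, the radius drops below $\eps/2$ in $O(\eps^{-q})$ steps, after which one further derivation gives the empty set; hence $\text{Sz}(X,\eps) = O(\eps^{-q})$ and $p_X \leq q$.

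\emph{Hard direction $q_X \leq p_X$.} Given $q > p_X$ and $\text{Sz}(X,\eps) \leq C\eps^{-q}$, I construct an equivalent dual norm with $\overline{\delta}^*(t) \geq ct^q$. For $n\geq 0$ set $\eps_n = 2^{-n}$, $N_n = \lceil C\eps_n^{-q}\rceil$, and consider the decreasing families of weak$^*$-compact convex sets $K_{n,k} := \overline{\text{co}}^{\,w^*}\bigl(s_{\eps_n}^k(B_{X^*})\bigr)$, which terminate with $K_{n,N_n} = \emptyset$ and satisfy $K_{n,k+1} \subseteq s_{\eps_n}(K_{n,k})$. At each scale $n$, use the stratification $(K_{n,k})_k$ to build a weak$^*$-lower semicontinuous dual norm $\|\cdot\|_n$ on $X^*$ whose unit ball ``counts'' Szlenk levels at scale $\eps_n$, each crossing costing an increment of order $\eps_n$ (for instance, as the Minkowski functional of $\overline{\text{co}}^{\,w^*}\bigl(\bigcup_k \alpha_{n,k} K_{n,k}\bigr)$ with suitably tuned $\alpha_{n,k}$). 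Amalgamate into
$$|x^*|^q := \sum_{n\geq 0} \lambda_n \|x^*\|_n^q,$$
with weights $\lambda_n$ tuned so that the series is finite and comparable on $B_{X^*}$ to the original dual norm. Weak$^*$-lower semicontinuity of each $\|\cdot\|_n$ passes to $|\cdot|$, making it the dual of an equivalent norm on $X$.

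\emph{Main obstacle.} The crux is verifying $\overline{\delta}^*_{|\cdot|}(t) \geq ct^q$. At $x^* \in S_{|\cdot|}$ and small $t > 0$, pick the scale $n$ with $\eps_n \sim t$. Since $x^*$ has finite Szlenk depth at scale $\eps_n$, one has $x^* \in K_{n,k}\setminus K_{n,k+1}$ for some $k$; the definition of Szlenk derivation then furnishes a weak$^*$-open neighborhood of $x^*$ in $K_{n,k}$ of $|\cdot|$-diameter $<\eps_n$, and a separating linear functional gives a weak$^*$-closed finite-codimensional $E_n \subseteq X^*$ such that any displacement $t y^*$ with $y^* \in S_{E_n}$ drives $x^* + ty^*$ across the level $K_{n,k+1}$, producing a $\|\cdot\|_n$-increment of order $\eps_n \sim t$. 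The $\ell_q$-amalgamation then converts this single-scale linear gain into the required uniform $t^q$-increment in $|\cdot|^q$, uniformly over $y^* \in S_{E_n}$. The delicate points are: picking a single $E_n$ that serves every direction (possible because the separating functional is one-dimensional); controlling interference from other scales $m \neq n$ via the weights $\lambda_m$; and checking that the amalgamated norm remains equivalent to the original. Together with the easy direction this yields $p_X = q_X$.
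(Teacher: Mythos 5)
This theorem is not proved in the paper: it is quoted verbatim from \cite{GKL} (Theorem 4.8 there), so the comparison must be with the argument of Godefroy--Kalton--Lancien. Your \emph{easy direction} is correct and is the standard argument: the one-step shrinkage $s_\eps(rB_{X^*})\subseteq (r-c'\eps^q r^{1-q})B_{X^*}$, obtained by pushing a weak$^*$-null $\eps/2$-separated perturbation into an almost-optimal finite-codimensional $E$ (using that $z_n^*\buildrel{w^*}\over\longrightarrow 0$ forces $\mathrm{dist}(z_n^*,E)\to 0$), together with monotonicity of the derivation and the observation that a ball of radius $<\eps/2$ dies in one step, gives $\Sz(X,\eps)=O(\eps^{-q})$.

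The hard direction has the right overall shape (per-scale renormings built from the convexified Szlenk sets, dyadically amalgamated with weights $\lambda_n$ whose decay costs an arbitrarily small extra power that the infimum absorbs), but it contains a genuine gap at exactly the point where the real work lies. You set $K_{n,k}=\overline{\mathrm{co}}^{\,w^*}(s_{\eps_n}^k(B_{X^*}))$ and then argue that $x^*\in K_{n,k}\setminus K_{n,k+1}$ ``furnishes a weak$^*$-open neighborhood of $x^*$ in $K_{n,k}$ of diameter $<\eps_n$.'' This does not follow: $K_{n,k+1}$ is the weak$^*$-closed convex hull of $s_{\eps_n}^{k+1}(B_{X^*})$, which may be a proper subset of $s_{\eps_n}(K_{n,k})$ (and your asserted inclusion $K_{n,k+1}\subseteq s_{\eps_n}(K_{n,k})$ goes the wrong way for the same reason --- derivation and convex hull do not commute). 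A point outside $K_{n,k+1}$ can perfectly well survive the derivation of $K_{n,k}$, in which case every weak$^*$-neighborhood of it in $K_{n,k}$ has diameter $\ge\eps_n$ and your separating functional yields nothing. Controlling the Szlenk-type behaviour of the \emph{convex hulls} --- equivalently, passing from weak$^*$-open sets to weak$^*$-slices of convex bodies with a quantitative loss --- is precisely the content of the key lemmas in \cite{KOS}, \cite{GKL} and \cite{R}, and it cannot be waved away. There is also a quantitative inconsistency: with $N_n\sim\eps_n^{-q}$ levels at scale $\eps_n$, an increment ``of order $\eps_n$'' per crossing would make the total inflation $N_n\eps_n\sim\eps_n^{1-q}$ blow up and destroy equivalence of the norm; the increments must be of order $1/N_n\sim\eps_n^{q}$, so that the single-scale gain at displacement $t\sim\eps_n$ is already $\sim t^q$ --- the amalgamation does not ``convert a linear gain into $t^q$,'' it only merges the scales at the cost of the $\delta$-loss in the exponent.
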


Let us recall that  $\overline\rho_X$ is equivalent to the dual Young function of
$\overline{\delta}_X^*$. In particular, if there exist $c>0$ and
$q>1$ so that $\overline{\delta}_X^*(t)\geq ct^q$, then there
exists $C>0$ such that $\overline\rho_X(t)\le Ct^p$, where $p$ is
the conjugate exponent of $q$.

\section{The three-space problem and the Szlenk index}

In \cite{La} it is proved that the condition ``Sz$(X)<\omega_1$''
is a three-space property. In fact it is shown more precisely that
if $Y$ is a closed subspace of a Banach space $X$ then $\Sz(X)\le
\Sz(X/Y).\Sz(Y)$. Our main objective will be to improve this
result as follows:

\begin{Prop}\label{key} Let $X$ be a Banach space such that $\Sz(X)<\omega_1$
and let $Y$ be a closed
subspace of $X$. Then there exists a constant $C\ge 1$ such that
$$\forall \eps>0\ \ \ \Sz(X,\eps)\le
\Sz(X/Y,\frac{\eps}{C}).\Sz(Y,\frac{\eps}{C}).$$
\end{Prop}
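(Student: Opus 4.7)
My plan is to refine the argument from \cite{La} (which gave $\Sz(X) \le \Sz(X/Y)\cdot\Sz(Y)$) by carefully tracking the $\eps$-parameter at each stage. Let $q : X \to X/Y$ be the quotient map, so that $q^* : (X/Y)^* \to X^*$ is an isometric weak$^*$-to-weak$^*$ homeomorphism onto $Y^\perp$; and let $\pi : X^* \to Y^*$, $x^* \mapsto x^*|_Y$, be the restriction map, which is weak$^*$-to-weak$^*$ continuous with $\ker\pi = Y^\perp$. By Hahn--Banach, $\pi(B_{X^*}) = B_{Y^*}$ and $\|\pi(x^*)\| = \|x^*\|_{X^*/Y^\perp}$.

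The strategy is to show that each block of $\alpha := \Sz(X/Y, \eps/C)$ consecutive $s_\eps$-derivations of $B_{X^*}$ forces at least one $s_{\eps/C}$-derivation on $B_{Y^*}$ along $\pi$. Setting $L_\gamma := s_\eps^{\alpha \cdot \gamma}(B_{X^*})$, a transfinite induction would establish
\[\pi(L_\gamma) \subseteq s_{\eps/C}^{\gamma}(B_{Y^*}) \qquad \text{for every ordinal } \gamma.\]
Choosing $\gamma = \Sz(Y, \eps/C)$ then forces $\pi(L_\gamma) = \emptyset$, hence $L_\gamma = \emptyset$, giving the claimed bound. The limit step uses that $\pi$ commutes with descending intersections of weak$^*$-compact sets. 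The successor step reduces, via the elementary monotonicity $s_\eps^\alpha(L) \cap U \subseteq s_\eps^\alpha(L \cap U)$ for weak$^*$-open $U$ and the fact that weak$^*$-closure does not increase norm-diameter of bounded subsets of $Y^*$, to the following \emph{key sublemma}: for every weak$^*$-compact $M \subseteq B_{X^*}$ with $\text{diam}(\pi(M)) < \eps/C$ in the norm of $Y^*$, one has $s_\eps^{\Sz(X/Y,\eps/C)}(M) = \emptyset$.

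For the sublemma, I would transport the Szlenk derivation on $M$ to one on $(X/Y)^*$ via $q^*$. After a harmless translation, $\|\pi(m)\| < \eps/C$ for every $m \in M$, so $d(m, Y^\perp) < \eps/C$. The weak$^*$-compact set $N := (M + (\eps/C)B_{X^*}) \cap Y^\perp$ then lies in $(1+\eps/C)B_{Y^\perp}$; via $q^*$ it corresponds to a bounded subset of $(X/Y)^*$. Any pair $(y^*,z^*) \in M \times M$ with $\|y^*-z^*\| \ge \eps$ gives rise to a pair in $N$ at distance at least $\eps - 2\eps/C$, and combining this correspondence with the scaling identity $\Sz(rB_{Z^*},\eta) = \Sz(Z,\eta/r)$ and choosing $C$ large enough yields $s_\eps^{\Sz(X/Y,\eps/C)}(M) = \emptyset$.

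The main obstacle is the sublemma: the correspondence $m \mapsto w^* \in N$ is not weak$^*$-continuous (there is no weak$^*$-continuous section $Y^* \to X^*$ unless $Y$ is complemented), so the transport from $M$ to $N$ cannot proceed by topological conjugation and must instead be carried out at the level of Szlenk witnesses. This is the source of the constant $C$; the combination of the $\alpha$-iteration with the outer $\gamma$-induction is then routine ordinal arithmetic.
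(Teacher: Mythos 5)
Your overall architecture is the same as the paper's: you iterate the derivation on $B_{X^*}$ in blocks of length $\Sz(X/Y,\eps/C)$ and prove by transfinite induction that each block forces one derivation of $B_{Y^*}$ (the paper uses the canonical quotient $Q:X^*\to X^*/Y^{\perp}$, which is your $\pi$ up to the isometric weak$^*$ identification $X^*/Y^{\perp}\cong Y^*$), and your reduction of the successor step, via localization to a weak$^*$-open set, to the sublemma about a set $M$ with $\diam\pi(M)<\eps/C$ is exactly the paper's reduction to the inclusion $V\cap s_\eps^{\gamma_\eps.\beta}(B_{X^*})\subseteq F+\frac{\eps}{6}B_{X^*}$ with $F=x^*+3B_{Y^\perp}$.

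The problem is that the sublemma carries the entire content of the proposition and you do not prove it; you explicitly flag it as ``the main obstacle'' and leave it there. Your device of sending $\eps$-separated pairs of $M$ to $(\eps-2\eps/C)$-separated pairs of $N=(M+(\eps/C)B_{X^*})\cap Y^{\perp}$ cannot close the argument on its own: the Szlenk derivation is governed by weak$^*$-neighbourhoods, not by separated pairs, and, as you yourself observe, there is no weak$^*$-continuous selection $M\to N$ to conjugate the derivations. What fills this gap in the paper is a perturbation lemma for \emph{iterated} derivations (Lemma \ref{elementary}): for every weak$^*$-compact $F$ and every countable ordinal $\alpha$,
$$s_\eps^\alpha\Big(F+\frac{\eps}{6}B_{X^*}\Big)\subseteq s_{\frac{\eps}{6}}^\alpha(F)+\frac{\eps}{6}B_{X^*}.$$
Its proof is precisely the ``level of Szlenk witnesses'' argument you allude to but do not carry out: if $x^*$ survives one derivation of $F+\frac{\eps}{6}B_{X^*}$, witnessed by $x_n^*\buildrel{w^*}\over{\longrightarrow}x^*$ with $\|x_n^*-x^*\|\ge\frac{\eps}{2}$, write $x_n^*=u_n^*+v_n^*$ with $u_n^*\in F$ and $\|v_n^*\|\le\frac{\eps}{6}$, extract weak$^*$-convergent subsequences by compactness, and note $\|u_n^*-u^*\|\ge\frac{\eps}{2}-\frac{2\eps}{6}=\frac{\eps}{6}$, so that $u^*$ survives an $\frac{\eps}{6}$-derivation of $F$; the limit ordinal case again uses weak$^*$-compactness to pass decompositions to the limit. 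Applying this with $F$ a translate of $3B_{Y^{\perp}}$ and the scaling $s_{\eps/6}^\alpha(3B_{Y^{\perp}})=3\,s_{\eps/18}^\alpha(B_{(X/Y)^*})$ yields your sublemma with $C=18$. Until you prove a stability statement of this kind, the proposition is not established.
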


\begin{proof} Since the Szlenk index of a Banach space, when it is countable, is
determined by its separable subspaces (see \cite{La}), we may and
will assume that $X$ is separable. Then we start with the
following elementary Lemma.

\begin{Lem}\label{elementary} Let $F$ be a weak$^*$-compact subset of $X^*$. Then,
for any countable ordinal $\alpha$ and any $\eps>0$:
$$s_\eps^\alpha(F+\frac{\eps}{6}B_{X^*}) \subseteq
s_{\frac{\eps}{6}}^\alpha(F)+\frac{\eps}{6}B_{X^*}.$$
\end{Lem}

\begin{proof} This will be done by transfinite induction. The
statement is clearly true for $\alpha=0$. Let now $\alpha\ge 1$
and assume that our statement is true for all $\beta<\alpha$.

\noindent If $\alpha=\beta+1$ is a successor ordinal. Let us
consider $x^*\in s_\eps^\alpha(F+\frac{\eps}{6}B_{X^*})$. Then
there is a sequence $(x_n^*)$ in
$s_\eps^\beta(F+\frac{\eps}{6}B_{X^*}) \subseteq
s_{\frac{\eps}{6}}^\beta(F)+\frac{\eps}{6}B_{X^*}$ so that $x_n^*
\buildrel {w^*}\over {\longrightarrow} x^*$ and $\|x_n^*-x^*\|\ge
\frac{\eps}{2}$. So we can write $x_n^*=u_n^*+v_n^*$ with $u_n^*
\in s_{\frac{\eps}{6}}^\beta(F)$ and $v_n^* \in
\frac{\eps}{6}B_{X^*}$. By extracting a subsequence, we may as
well assume that $u_n^* \buildrel {w^*}\over {\longrightarrow}
u^*$ and $v_n^* \buildrel {w^*}\over {\longrightarrow} v^*$.
Clearly $u^*+v^*=x^*$ and $v^*\in \frac{\eps}{6}B_{X^*}$. Finally,
$$\|u^*-u_n^*\|\ge \|x^*-x_n^*\|-\|v^*-v_n^*\|\ge
\frac{\eps}{2}-\frac{2\eps}{6}=\frac{\eps}{6}.$$ This concludes the successor case.

\noindent Assume now that $\alpha$ is a countable limit ordinal
and $x^*\in s_\eps^\alpha(F+\frac{\eps}{6}B_{X^*})$. Then for any
$\beta<\alpha$, $x^*\in
s_{\frac{\eps}{6}}^\beta(F)+\frac{\eps}{6}B_{X^*}$. Since $\alpha$
is countable, we can pick an increasing sequence of ordinals
$(\beta_n)$ so that $\beta_n<\alpha$ and $\sup_n \beta_n=\alpha$.
Then, for all $n\in \Ndb$, we can write $x^*=u_n^*+v_n^*$ with
$u_n^* \in s_{\frac{\eps}{6}}^{\beta_n}(F)$ and $v_n^* \in
\frac{\eps}{6}B_{X^*}$. By extracting a subsequence, we may again
assume that $u_n^* \buildrel {w^*}\over {\longrightarrow} u^*$ and
$v_n^* \buildrel {w^*}\over {\longrightarrow} v^*$. Then
$u^*+v^*=x^*$, $v^*\in \frac{\eps}{6}B_{X^*}$ and $u^*\in
\bigcap_{n\ge
1}s_{\frac{\eps}{6}}^{\beta_n}(F)=s_{\frac{\eps}{6}}^{\alpha}(F)$.
The proof of this lemma is finished.

\end{proof}

Our next step is to show:

\begin{Lem}\label{quotientmap} Let $Q$ be the canonical quotient map from $X^*$ onto
$X^*/Y^{\perp}$. For $\eps>0$, we denote
$\gamma_\eps=\Sz(X/Y,\frac{\eps}{18})$. Then
$$Q(s_\eps^{\gamma_\eps.\alpha}(B_{X^*}))\subseteq
s_{\frac{\eps}{6}}^\alpha (B_{X^*/Y^{\perp}}).$$
\end{Lem}

\begin{proof} This will again be done by transfinite induction.
The case $\alpha=0$ is clear. So assume that $\alpha\ge 1$ and
that the conclusion of the above lemma holds for all
$\beta<\alpha$.

\noindent Let $\alpha=\beta+1$ be a successor ordinal and let
$x^*\in X^*$ be such that $Qx^*\notin s_{\frac{\eps}{6}}^{\beta+1}
(B_{X^*/Y^{\perp}})$. We want to show that $x^*\notin
s_\eps^{\gamma_\eps.(\beta+1)}(B_{X^*})$. Therefore, we may assume
that $x^* \in s_\eps^{\gamma_\eps.\beta}(B_{X^*})$ and so, by
induction hypothesis, that $Qx^*\in s_{\frac{\eps}{6}}^{\beta}
(B_{X^*/Y^{\perp}})$. Hence, there exists a weak$^*$-neighborhood
$W$ of $Qx^*$ such that the diameter of $W\cap
s_{\frac{\eps}{6}}^{\beta} (B_{X^*/Y^{\perp}})$ is less than
$\frac{\eps}{6}$. By reducing it, we may assume that
$$W=\bigcap_{i=1}^n \{z\in X^*/Y^{\perp},\ \langle y_i,z\rangle
>a_i\}\ \ {\rm with}\ y_i\in Y\ {\rm and}\ a_i\in \Rdb\}.$$ Then we consider
$$V=\bigcap_{i=1}^n \{\xi^*\in X^*,\ \langle y_i,\xi^*\rangle
>a_i\}.$$
The set $V$ is a weak$^*$-neighborhood of $x^*$. Let now $\xi^*\in
V\cap s_\eps^{\gamma_\eps.\beta}(B_{X^*})$. Then, by induction
hypothesis, $Q\xi^* \in W\cap s_{\frac{\eps}{6}}^{\beta}
(B_{X^*/Y^{\perp}})$ and $\|Q\xi^*-Qx^*\|<\frac{\eps}{6}$. It
follows that $\xi^*\in x^*+3B_{Y^\perp}+\frac{\eps}{6}B_{X^*}$.
Therefore, we have shown that
$$\big(V\cap s_\eps^{\gamma_\eps.\beta}(B_{X^*})\big) \subseteq
F+\frac{\eps}{6}B_{X^*},$$ where $F=x^*+3B_{Y^\perp}$. It follows
now from Lemma \ref{elementary} that for any ordinal $\delta$:
$$s_\eps^\delta\big(V\cap s_\eps^{\gamma_\eps.\beta}(B_{X^*})\big)
\subseteq s_{\frac{\eps}{6}}^\delta(F)+\frac{\eps}{6}B_{X^*}.$$
Finally an easy use of translations and homogeneity shows that
$s_{\frac{\eps}{6}}^{\gamma_\eps}(F)=\emptyset$, which implies
that $x^* \notin s_\eps^{\gamma_\eps.(\beta+1)}(B_{X^*})$ and
finishes the successor case.

Assume now that $\alpha$ is a limit ordinal. By definition of the
ordinal multiplication, $\gamma_\eps.\alpha=\sup_{\beta<\alpha}
\gamma_\eps.\beta$. The induction is then clear.

\end{proof}

Proposition \ref{key} follows clearly from Lemma
\ref{quotientmap}.
\end{proof}

It is well known and quite elementary that for a Banach space $X$,
either $\Sz(X)=\infty$ (equivalently, $X$ is not an Asplund space)
or there exists an ordinal $\alpha$ such that
$\Sz(X)=\omega^\alpha$ (see \cite{La2} for details). It is then natural
to wonder whether, for any ordinal $\alpha$,  the condition
``$\Sz(X)= \omega^\alpha$'' is a three-space property. Proposition
\ref{key} does not allow us to obtain this conclusion. However, we
can deduce the following.

\begin{Cor}\label{sz3sp} For any countable ordinal $\alpha$, the condition ``$\Sz(X)\le
\omega^{\omega^\alpha}$'' is a three-space property.
\end{Cor}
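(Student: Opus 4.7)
The plan is to deduce this corollary directly from Proposition \ref{key} together with a standard closure property of ordinals of the form $\omega^{\omega^\alpha}$ under ordinal multiplication. Specifically, I would show that if both $\Sz(Y)$ and $\Sz(X/Y)$ are $\le \omega^{\omega^\alpha}$, then the pointwise bound $\Sz(X,\eps)\le \Sz(X/Y,\eps/C)\cdot \Sz(Y,\eps/C)$ keeps each $\Sz(X,\eps)$ strictly below $\omega^{\omega^\alpha}$, and then pass to the supremum over $\eps$.

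The argument I would write has three steps. First, under the hypothesis, both $\Sz(Y)$ and $\Sz(X/Y)$ are countable, so by the result of \cite{La} one already has $\Sz(X)<\omega_1$, which legitimizes applying Proposition \ref{key}. Second, for any Banach space $W$ and any $\eps>0$, the ordinal $\Sz(W,\eps)$ is either $0$ or a successor: indeed, if $\lambda$ were a limit ordinal with $s_\eps^\beta B_{W^*}\neq \emptyset$ for all $\beta<\lambda$, then the $w^*$-compact sets $s_\eps^\beta B_{W^*}$ would have the finite intersection property, forcing $s_\eps^\lambda B_{W^*}=\bigcap_{\beta<\lambda} s_\eps^\beta B_{W^*}\neq \emptyset$. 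Since $\omega^{\omega^\alpha}$ is a limit ordinal, it follows that $\Sz(Y,\eps/C)<\omega^{\omega^\alpha}$ and $\Sz(X/Y,\eps/C)<\omega^{\omega^\alpha}$ for every $\eps>0$. Third, I would invoke the ordinal fact that $\omega^{\omega^\alpha}$ is multiplicatively indecomposable: if $\beta,\delta<\omega^{\omega^\alpha}$, write $\beta<\omega^{\beta_0}$ and $\delta<\omega^{\delta_0}$ with $\beta_0,\delta_0<\omega^\alpha$ (using Cantor normal form); then $\beta\cdot\delta<\omega^{\beta_0+\delta_0}<\omega^{\omega^\alpha}$, where the last inequality uses that $\omega^\alpha$ is additively indecomposable. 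Combining these yields $\Sz(X,\eps)<\omega^{\omega^\alpha}$ for every $\eps>0$, hence $\Sz(X)=\sup_{\eps>0}\Sz(X,\eps)\le \omega^{\omega^\alpha}$.

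There is no real obstacle here beyond having Proposition \ref{key} in hand; the only subtlety is the observation that $\Sz(W,\eps)$ is never a limit ordinal, which is what makes the strict inequalities $\Sz(Y,\eps/C),\Sz(X/Y,\eps/C)<\omega^{\omega^\alpha}$ available. I would mention (perhaps in a remark) that the same argument produces a three-space result for the condition $\Sz(X)<\omega^{\omega^\alpha}$, and that it does \emph{not} adapt to general ``$\Sz(X)=\omega^\alpha$'' because arbitrary $\omega^\alpha$ need not be closed under ordinal multiplication — this is precisely the gap explaining why the corollary is stated only for the multiplicatively indecomposable values $\omega^{\omega^\alpha}$.
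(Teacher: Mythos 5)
Your proposal is correct and follows essentially the same route as the paper: apply Proposition \ref{key}, observe via weak$^*$-compactness that $\Sz(Z,\eps)$ is never a nonzero limit ordinal (hence the bounds $\le\omega^{\omega^\alpha}$ upgrade to strict inequalities at each $\eps$), and conclude using the multiplicative indecomposability of $\omega^{\omega^\alpha}$. The paper's proof is just a terser version of your three steps, and your closing remarks mirror the remarks the authors make after the corollary.
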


\begin{proof} Let $X$ be a Banach space and $Y$ be a subspace of
$X$ so that $\Sz(Y)\le \omega^{\omega^\alpha}$ and $\Sz(X/Y)\le
\omega^{\omega^\alpha}$. Note that, for any $\eps>0$, and any
Banach space $Z$, either $\Sz(Z,\eps)=\infty$ or $\Sz(Z,\eps)$ is
a successor ordinal (it follows clearly from the use of
weak$^*$-compactness). Now, it is an elementary fact from ordinal
arithmetic that whenever $\alpha,\ \beta$ and $\gamma$ are
ordinals such that $\beta <\omega^{\omega^\alpha}$ and $\gamma
<\omega^{\omega^\alpha}$, then $\beta
.\gamma<\omega^{\omega^\alpha}$. This finishes our proof.
\end{proof}

\noindent {\bf Remark.}

(1) A closer look at the above proof actually
allows to show that both conditions ``$\Sz(X)=
\omega^{\omega^\alpha}$'' and ``$\Sz(X)< \omega^{\omega^\alpha}$''
are three-space properties.

\smallskip (2) It should also be noted that, except for $\alpha=0$, Corollary \ref{sz3sp} and the above remark follow from the estimate obtained in \cite{La}. The only new information is that the condition $\Sz(X)\le \omega$ is a three space property. This is however crucial, since this will have some consequences in renorming theory as we will see in detail in the last section.

\section{Applications to renorming theory}

We can now state our main result.

\begin{Thm}\label{main}
Let $Y$ be a closed linear subspace of a separable Banach space
$X$. Assume that $Y$ and $X/Y$ admit an equivalent AUS renorming.
Then $X$ admits an equivalent AUS norm.

\indent More precisely, let $p,q\in [1,+\infty)$ and $c>0$ be such
that
$$\forall t>0\ \ \ \overline{\delta}_Y^*(t)\ge c\eps^p\ \ {\rm
and}\ \ \overline{\delta}_{X/Y}^*(t)\ge c\eps^q.$$ Then, for all
$\delta>0$, there exists an equivalent norm $|\ |$ on $X$ and a
constant $\gamma>0$ such that

$$\forall t>0\ \ \ \overline{\delta}_{|\ |}^*(t)\ge
\gamma\eps^{p+q+\delta}.$$
\end{Thm}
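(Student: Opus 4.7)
The plan is to route everything through the Szlenk index, combining Proposition \ref{key} with the quantitative link between the Szlenk power type and the power type of $\overline{\delta}^*$ recalled at the end of the introduction.

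\emph{Step 1.} The hypothesis $\overline{\delta}_Y^*(t)\ge ct^p$ feeds directly into the implication (iii) $\Rightarrow$ (iv) of Corollary \ref{ukktfae}: the standard iteration of Szlenk derivations yields a constant $C_1=C_1(c,p)$ with $\Sz(Y,\eps)\le C_1\eps^{-p}$ for every $\eps\in(0,1]$. (The power $p$ is preserved because, by homogeneity, one derivation of $\rho B_{Y^*}$ with parameter $\eta$ shrinks the radius by at least $c'\eta^p/\rho^{p-1}$, so $O(\eps^{-p})$ iterations empty $B_{Y^*}$.) The same argument for $X/Y$ produces $C_2=C_2(c,q)$ with $\Sz(X/Y,\eps)\le C_2\eps^{-q}$ for every $\eps\in(0,1]$. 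In particular both indices are at most $\omega$, and the inequality of \cite{La} gives $\Sz(X)\le \omega^2<\omega_1$, so Proposition \ref{key} is applicable to $X$.

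\emph{Step 2.} Let $C\ge 1$ be the constant furnished by Proposition \ref{key}. Multiplying the two bounds of Step 1,
$$\Sz(X,\eps)\le \Sz\bigl(X/Y,\tfrac{\eps}{C}\bigr)\cdot \Sz\bigl(Y,\tfrac{\eps}{C}\bigr)\le C_1\,C_2\,C^{p+q}\,\eps^{-(p+q)}\qquad\forall\,\eps\in(0,1].$$
Consequently the Szlenk power type $p_X$ of $X$ satisfies $p_X\le p+q$.

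\emph{Step 3.} Fix $\delta>0$. Since $p+q+\delta>p+q\ge p_X$, the characterization of $p_X$ as an infimum provided by the theorem of \cite{GKL} stated just before Section 2 supplies an equivalent norm $|\,\cdot\,|$ on $X$ and a constant $\gamma>0$ with
$$\overline{\delta}^*_{|\,\cdot\,|}(t)\ge \gamma\,t^{p+q+\delta}\qquad\forall\,t>0,$$
which is precisely the required conclusion.

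All of the genuine work has been front-loaded into Proposition \ref{key}; Steps 1 and 3 perform the standard two-way translation between the Szlenk index and the power type of $\overline{\delta}^*$, and the unavoidable loss of $\delta$ reflects the fact that the infimum defining $p_X$ need not be attained.
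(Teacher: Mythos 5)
Your proposal is correct and follows essentially the same route as the paper: translate the power-type bounds on $\overline{\delta}_Y^*$ and $\overline{\delta}_{X/Y}^*$ into the Szlenk estimates $\Sz(Y,\eps)\le C\eps^{-p}$ and $\Sz(X/Y,\eps)\le C\eps^{-q}$, multiply them via Proposition \ref{key} to get $\Sz(X,\eps)\le C'\eps^{-(p+q)}$, and conclude with the Godefroy--Kalton--Lancien theorem on the Szlenk power type. Your added remark that the applicability of Proposition \ref{key} rests on $\Sz(X)<\omega_1$, guaranteed by the earlier estimate of \cite{La}, is a point the paper leaves implicit and is a welcome clarification.
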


\begin{proof} Under these assumptions, it is an easy and classical
fact that there is a constant $C>0$ such that
$$\Sz(Y,\eps)\le C\eps^{-p}\ \ \ {\rm and}\ \ \ \Sz(X/Y,\eps)\le
C\eps^{-q},\ \ \ \eps\in(0,1).$$ We now deduce from Proposition \ref{key} that
there exists $C'>0$ so that
$$\Sz(X,\eps)\le C'\eps^{-(p+q)}.$$
Finally, the conclusion follows from Theorem 4.8 in \cite{GKL}.

\end{proof}

\noindent {\bf Remark.} The fact that AUS renormability is also a
three-space property in the non separable case can be viewed as a
consequence of Proposition \ref{key} and Raja's non separable
version \cite{R} of the renorming theorem of Knaust, Odell and
Schlumprecht \cite{KOS}. In order to get the above quantitative
estimate, one needs to extend Theorem 4.8 of \cite{GKL} to the
non-separable settings. This is feasible by using nets instead of
sequences and the corresponding uncountably branching trees and
tree maps.

\begin{Cor}\label{NUC} Let $Y$ be a closed linear subspace of a Banach space
$X$. Assume that $Y$ and $X/Y$ admit an equivalent NUS
(respectively NUC) renorming. Then $X$ admits an equivalent NUS
(respectively NUC) norm.
\end{Cor}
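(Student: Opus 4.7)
\textbf{The plan} is to derive both halves of the corollary from Theorem \ref{main} by combining the characterisations NUS $=$ AUS $+$ reflexive (Theorem \ref{prus}) and NUC $=$ AUC $+$ reflexive (Theorem \ref{huff}) with the three-space property of reflexivity (\cite{KS}) and, for the NUC case, the duality between AUS and AUC norms on reflexive spaces encoded in Proposition \ref{duality}. In the NUS case, if $Y$ and $X/Y$ admit equivalent NUS norms, then by Theorem \ref{prus} each of them is reflexive and AUS-renormable. Since reflexivity is a three-space property, $X$ is reflexive; Theorem \ref{main} (or the non-separable extension noted in the remark following its proof) then provides an equivalent AUS norm on $X$, which by Theorem \ref{prus} applied to the reflexive space $X$ is automatically NUS.

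For the NUC case, assume $Y$ and $X/Y$ are NUC-renormable. By Theorem \ref{huff} each is reflexive and AUC-renormable, and the three-space property of reflexivity again gives that $X$ is reflexive. I would now transfer the problem to the dual. Applying Proposition \ref{duality} with $Y^*$ in the role of ``$X$'' identifies w$^*$-AUC norms on $Y^{**} = Y$ with AUS norms on $Y^*$; since $Y$ is reflexive, the w$^*$-closed finite-codimensional subspaces of $Y^{**}$ coincide with the norm-closed ones, so the w$^*$-AUC and AUC moduli agree, and an equivalent AUC norm on $Y$ yields an equivalent AUS norm on $Y^*$. Combined with reflexivity of $Y^*$, Theorem \ref{prus} upgrades this to an equivalent NUS norm on $Y^*$, and the same argument works for $(X/Y)^*$. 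Using the standard identifications $(X/Y)^* \cong Y^\perp$ and $Y^* \cong X^*/Y^\perp$, the NUS case applied to the triple $(X^*, Y^\perp, X^*/Y^\perp)$ yields that $X^*$ is NUS-renormable, hence reflexive and AUS-renormable. Dualising back via Proposition \ref{duality} together with the reflexivity of $X$ then produces an equivalent AUC norm on $X$, which by Theorem \ref{huff} is NUC.

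\textbf{The main obstacle} is the duality passage from an AUC norm on a reflexive $Y$ to an AUS norm on $Y^*$: Proposition \ref{duality} directly delivers only the w$^*$-modulus, and one must observe that in the reflexive case this w$^*$-modulus coincides with the norm-AUC modulus so that the two classes of renormings align. Once this identification is in hand, the rest is a clean bookkeeping exercise combining Theorem \ref{main}, Theorems \ref{huff} and \ref{prus}, and the three-space property of reflexivity; all of the genuine analytic work has already been carried out in Proposition \ref{key} and Theorem \ref{main}.
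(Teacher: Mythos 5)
Your proposal is correct and follows essentially the same route as the paper: reduce the NUC case to the NUS case by dualizing along the exact sequence $Y^\perp \hookrightarrow X^* \twoheadrightarrow X^*/Y^\perp \cong Y^*$, then settle the NUS case by combining the characterization NUS $=$ AUS $+$ reflexive with the Krein--\v{S}mulyan three-space property of reflexivity and Theorem \ref{main}. The only difference is that you spell out explicitly the ``simple duality argument'' (via Proposition \ref{duality} and the coincidence of the w$^*$-AUC and AUC moduli in the reflexive case) that the paper leaves to the reader.
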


\begin{proof} In both cases, the assumptions imply that $Y$ and
$X/Y$ are reflexive. Thus, by Krein-\v{S}mulyan's Theorem \cite{KS}, $X$ is
also reflexive. Then, by Theorem \ref{prus} and a simple duality
argument, we only need to prove the NUS case. The proof now
follows from the fact that a norm on $X$ is NUS if and only if it
is AUS and $X$ is reflexive and from Theorem \ref{main}

\end{proof}

We shall now consider another asymptotic property of norms. It was introduced by S. Rolewicz in \cite{Ro} and is now called property $(\beta)$ of Rolewicz. For its definition, we shall use a characterization due to D. Kutzarova \cite{Ku1}.

\begin{Def} An infinite-dimensional Banach space $X$ is said to have property $(\beta)$ if for any $t\in (0,1]$,  there exists $\delta>0$ such that for any $x$ in $B_X$ and any $t$-separated sequence $(x_n)_{n=1}^\infty$ in $B_X$, there exists $n\ge 1$ so that
$$\frac{\|x-x_n\|}{2}\le 1-\delta.$$
For a given $t\in (0,1]$, we denote $\overline{\beta}_X(t)$ the supremum of all $\delta\ge 0$ so that the above property is satisfied.
\end{Def}

We shall need the following result:

\begin{Thm} Let $X$ be a separable Banach space. The following assertions are equivalent.

(i) $X$ admits an equivalent norm with property $(\beta)$.

(ii) $X$ is reflexive, $\text{Sz}(X)\le \omega$ and $\text{Sz}(X^*)\le \omega$.

(iii) $X$ is reflexive, admits an equivalent AUS norm and an equivalent AUC norm.

(iv)  $X$ is reflexive and admits an equivalent norm which is simultaneously AUS and AUC.

\end{Thm}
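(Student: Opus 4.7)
The plan is to prove the equivalences via the cycle (iv) $\Rightarrow$ (iii) $\Rightarrow$ (ii) $\Rightarrow$ (iv), supplemented by (iv) $\Leftrightarrow$ (i). First, (iv) $\Rightarrow$ (iii) is trivial: the same norm witnesses both individual renorming properties. For (iii) $\Rightarrow$ (ii), an equivalent AUS norm on $X$ yields $\Sz(X) \leq \omega$ directly from Corollary \ref{ukktfae}. An equivalent AUC norm combined with the reflexivity of $X$ gives an equivalent NUC norm by Huff's Theorem \ref{huff}; by Prus's Theorem \ref{prus}, the dual norm on $X^*$ is then NUS and (again by Prus) AUS, and a second application of Corollary \ref{ukktfae} to $X^*$ gives $\Sz(X^*) \leq \omega$.

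For (ii) $\Rightarrow$ (iv), I would use Corollary \ref{ukktfae}(iii) twice. From $\Sz(X) \leq \omega$ we obtain an equivalent norm $\|\cdot\|_1$ on $X$ that is AUS with a power-type modulus. From $\Sz(X^*) \leq \omega$, applied via Corollary \ref{ukktfae}(iii) to $X^*$, we obtain an equivalent norm on $X^*$ whose dual on $X^{**}$ is weak$^*$-AUC with power-type modulus; since $X^{**} = X$ by reflexivity and the weak and weak$^*$ topologies agree on $X = X^{**}$, the corresponding predual norm $\|\cdot\|_2$ on $X$ is AUC with power-type modulus. I would then define an equivalent norm such as $|||x|||^2 := \|x\|_1^2 + \|x\|_2^2$ and show that it is simultaneously AUS and AUC. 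Alternatively, one can appeal to the strong form of the Knaust--Odell--Schlumprecht theorem, which (together with the two Szlenk conditions and reflexivity) allows $X$ to be embedded in a reflexive space equipped with an FDD enjoying both an upper and a lower power-type estimate, and then pull the restricted norm back to $X$.

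The equivalence (iv) $\Leftrightarrow$ (i) rests on Kutzarova's characterization of property $(\beta)$ on reflexive spaces. For (i) $\Rightarrow$ (iv), applying the $(\beta)$ condition with $x = -x_1$ produces some $n \geq 1$ with $\|x_1 + x_n\|/2 \leq 1 - \delta$, yielding either $x_1 \in (1-\delta)B_X$ (when $n=1$) or a midpoint of two distinct terms of the sequence lying in $(1-\delta)B_X$; this gives NUC, hence reflexivity and AUC by Huff's Theorem \ref{huff}. A dual argument applied to normalized basic sequences gives NUS and hence AUS by Prus's Theorem \ref{prus}. For (iv) $\Rightarrow$ (i), given $x \in B_X$ and an $\eps$-separated sequence $(x_n) \subseteq B_X$, one extracts by reflexivity a weakly convergent subsequence $x_n \rightharpoonup z$; the uniform Kadec--Klee property (equivalent to AUC in reflexive spaces, via Huff) gives $\|z\| \leq 1 - \delta_1(\eps)$, and combining this with the AUS asymptotic estimate applied to the weakly null sequence $z - x_n$ yields a bound $\|x - x_n\|/2 \leq 1 - \delta$ for some $\delta > 0$ and some $n$.

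The main obstacle is the simultaneous-renorming step in (ii) $\Rightarrow$ (iv): verifying that the combined norm $|||\cdot|||$ is in fact both AUS and AUC. The difficulty is that the sum of an AUS norm and an arbitrary norm is not AUS in general, since the second norm contributes a term of linear order in $t$ to the smoothness modulus; one needs to exploit the AUS behaviour of the dual of $\|\cdot\|_2$ to control this contribution, or, more cleanly, realize both norms as restrictions of a single FDD structure with matching upper and lower power-type estimates.
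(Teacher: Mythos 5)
The paper does not actually prove this theorem: it is quoted from \cite{DKLR}, where the argument is assembled from earlier results of Huff, Prus, Kutzarova, Knaust--Odell--Schlumprecht and Odell--Schlumprecht, so there is no internal proof to compare against. Judged on its own terms, your outline follows the same general strategy as that literature, and several steps are sound: (iv)$\Rightarrow$(iii) is trivial; your route (iii)$\Rightarrow$(ii) via Theorem \ref{huff}, Theorem \ref{prus} applied to $X^{*}$ (using reflexivity to identify $X^{**}$ with $X$), and Corollary \ref{ukktfae} is correct; and your derivation that property $(\beta)$ implies NUC by testing the definition with $x=-x_{1}$ is the standard one.

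Two implications, however, contain genuine gaps. In (ii)$\Rightarrow$(iv), your primary construction $\tnorm{x}^{2}=\|x\|_{1}^{2}+\|x\|_{2}^{2}$ is not shown to work, and you correctly identify why: the AUC summand contributes a term of order $t$ to the smoothness modulus, and nothing in your sketch controls it. The route that actually works is the one you relegate to an ``alternatively'': the embedding theorem of \cite{OS}, under which a separable reflexive $X$ with $\Sz(X)\le\omega$ and $\Sz(X^{*})\le\omega$ embeds into a reflexive space with an FDD satisfying simultaneous upper and lower $\ell_{p}$-estimates, whose restricted norm is both AUS and AUC. More seriously, your argument for (iv)$\Rightarrow$(i) breaks down at the last step: after extracting $x_{n}\rightharpoonup z$ and obtaining $\|z\|\le 1-\delta_{1}$ from the Kadec--Klee-type property, you apply the AUS estimate to the weakly null sequence $z-x_{n}$ viewed as a perturbation of $x-z$; but $\overline{\rho}_{X}(t)/t\to 0$ only as $t\to 0$, whereas here $\|z-x_{n}\|$ is typically comparable to, or larger than, $\|x-z\|$, so the modulus yields nothing beyond the triangle inequality $\|x-x_{n}\|\le\|x-z\|+\|z-x_{n}\|$, which can exceed $2(1-\delta)$. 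Obtaining property $(\beta)$ from AUS and AUC requires a genuinely quantitative interplay between the two moduli; in \cite{DKLR} the implication toward (i) is instead obtained isomorphically, again via the embedding of \cite{OS} into a space with $(p,q)$-estimates (such spaces have property $(\beta)$ by \cite{Ku0}, \cite{Ku1}) combined with the fact that property $(\beta)$ passes to closed subspaces. You should restructure the cycle so that the hard work is done once, by that embedding, rather than by direct modulus computations for a fixed norm.
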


This is detailled in \cite{DKLR}, where the proof is presented as a gathering of some previous results. We refer the reader to \cite{DKLR} for the proper references. These references include in particular results from \cite{Ku0}, \cite{P}, \cite{OS} and \cite{La}.

\medskip Then the following is an immediate consequence of Corollary \ref{NUC}

\begin{Cor} Let $X$ be a separable Banach space and $Y$ be a closed linear subspace of $X$. Assume that $Y$ and $X/Y$ admit an equivalent norm with property $(\beta)$. Then $X$ admits an equivalent norm with property $(\beta)$.

\end{Cor}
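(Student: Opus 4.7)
The plan is to invoke the characterization of property $(\beta)$ given in the unnumbered theorem immediately preceding the corollary, which reduces property $(\beta)$ to a pair of asymptotic conditions together with reflexivity. Concretely, the strategy is to translate the $(\beta)$ hypothesis on $Y$ and $X/Y$ into the combined NUS $+$ NUC form, lift each of these separately to $X$ via Corollary \ref{NUC}, and then translate back.

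First, I would observe that if $Y$ and $X/Y$ admit equivalent norms with property $(\beta)$, then by the equivalence (i) $\Leftrightarrow$ (iv) in the theorem each of $Y$ and $X/Y$ is reflexive and admits an equivalent norm that is simultaneously AUS and AUC. In particular, $Y$ and $X/Y$ each admit an equivalent NUS renorming (Theorem \ref{prus}) and, by Huff's Theorem \ref{huff}, an equivalent NUC renorming as well.

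Next, I would apply Corollary \ref{NUC} twice: once to the NUS property, yielding that $X$ admits an equivalent NUS norm, and once to the NUC property, yielding that $X$ admits an equivalent NUC norm. In particular $X$ is reflexive (which we also get independently from the fact that reflexivity is a three-space property, by the Krein--\v{S}mulyan theorem \cite{KS}), and $X$ admits simultaneously an equivalent AUS renorming and an equivalent AUC renorming (possibly different norms). This is precisely condition (iii) of the theorem on property $(\beta)$.

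Finally, invoking the implication (iii) $\Rightarrow$ (i) from that theorem delivers an equivalent norm on $X$ with property $(\beta)$, which is the desired conclusion. There is no real obstacle here since the heavy lifting has already been done in Corollary \ref{NUC} (and hence in Theorem \ref{main} and Proposition \ref{key}); the only point that requires any care is to make sure one is quoting the characterization in the form (iv) $\Rightarrow$ (iii), which allows the AUS and AUC features to be separated, treated one at a time by the three-space results, and then recombined via (iii) $\Rightarrow$ (i).
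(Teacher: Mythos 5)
Your proposal is correct and follows essentially the same route as the paper, which simply declares the corollary an immediate consequence of Corollary \ref{NUC}: you pass from property $(\beta)$ to reflexivity plus NUS and NUC renormability via the stated characterization, apply the three-space result to each property separately, and recombine. The only thing you add is the explicit bookkeeping of which equivalences in the characterization theorem are used, which is exactly the intended reading.
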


\end{document}